\newcommand{\wis}[1]{{\text{\em \usefont{OT1}{cmtt}{m}{n} #1}}}
\newcommand{\N}{\mathbb{N}}
\newcommand{\C}{\mathbb{C}}
\newcommand{\Oscr}{\mathcal{O}}
\DeclareMathOperator{\lcm}{lcm}
\newtheorem{definition}{Definition}[section]
\newtheorem{proposition}[definition]{Proposition}
\newtheorem{theorem}[definition]{Theorem}
\newtheorem{lemma}[definition]{Lemma}
\newtheorem{example}[definition]{Example}
\tikzset{
b/.style={bend left=10},
bb/.style={bend left},
cl/.style={outer sep=-1pt},
}
\title{Azumaya geometry and representation stacks }
\author{Jens Hemelaer} 
\address{Department of Mathematics, University of Antwerp \\ 
 Middelheimlaan 1, B-2020 Antwerp (Belgium) \\ {\tt jens.hemelaer@uantwerpen.be}}
\thanks{Jens Hemelaer is Ph.D.\ fellow of the Research Foundation -- Flanders (FWO)}
\author{Lieven Le Bruyn} 
\address{Department of Mathematics, University of Antwerp \\ 
 Middelheimlaan 1, B-2020 Antwerp (Belgium) \\ {\tt lieven.lebruyn@uantwerpen.be}}
\begin{document}
 
 \begin{abstract} We develop Azumaya geometry, which is an extension of classical affine geometry to the world of Azumaya algebras, and package the information contained in all quotient stacks $[\wis{rep}_n R/\mathrm{PGL}_n]$ into a presheaf $\wis{Rep}_R$ on it. We show that the classical \'etale and Zariski topologies extend to Grothendieck topologies on Azumaya geometry in uncountably many ways, and prove that $\wis{Rep}_R$ is a sheaf for all of them. The restriction to a specific Azumaya algebra $A$ with center $C$ gives us a sheaf in the \'etale topology which is represented by an affine $C$-scheme $\wis{rep}_A(R)$, which we call the Azumaya representation scheme of $R$ with respect to $A$.
 \end{abstract}
 
  \maketitle

 \section{Introduction}
 
 For a finitely generated $\C$-algebra $R$ one can attempt to classify its $n$-dimensional representation geometrically by investigating the $\mathrm{PGL}_n$-orbits of the representation scheme $\wis{rep}_n~R$. As the GIT-quotient scheme $\wis{rep}_n~R/\mathrm{PGL}_n$ only classifies isomorphism classes of semi-simple representations one needs to consider the quotient stack $[\wis{rep}_n~R/\mathrm{PGL}_n]$, called the {\em representation stack}, whose $\C$-points classify all $n$-dimensional representations. In \cite{LBstacks} a ring theoretical interpretation was given of the $C$-points of this representation stack, for any commutative $\C$-algebra $C$. They correspond to $\C$-algebra morphisms $R \rTo A$ where $A$ is a degree $n$ Azumaya algebra with center $C$.
 
Let $\wis{Azumaya}_{\C}$ be the category of all Azumaya algebras with center an affine $\C$-algebra and center-preserving ring morphisms, then all information from the representation stacks $[\wis{rep}_n~R/\mathrm{PGL}_n]$, for all $n$, is contained in the covariant functor
 \[
 \wis{Rep}_R~:~\wis{Azumaya}_{\C} \rTo \wis{sets} \qquad A \mapsto \mathrm{Alg}_{\C}(R,A) \]
 Assigning to each Azumaya algebra its non-commutative affine scheme, as in \cite{FVOVerschoren}, we will develop Azumaya affine geometry, $\wis{Azufine}_{\C} = \wis{Azumaya}^{op}$, a seemingly innocuous extension of classical affine geometry into the non-commutative realm. Some marked differences are that there do not exist pullbacks in Azumaya geometry and that \'etale and Zariski topologies extend to Grothendieck topologies on Azumaya geometry in uncountably many different ways. In Theorem~\ref{main0} we classify these Grothendieck topologies via patches on the supernatural numbers, see also \cite{Hemelaer}.
 
 We will prove that the presheaf $\wis{Rep}_R$ on Azumaya geometry is a sheaf for each of these Grothendieck topologies, see Theorem~\ref{main1}. If we fix an Azumaya algebra $A$ with center $C$, the restriction of this functor to affine $C$-schemes gives us a sheaf in the \'etale and Zariski topology
 \[
 \wis{Rep}_A(R)~:~\wis{affine}_C \rTo \wis{sets} \qquad Z \mapsto \mathrm{Alg}_{\C}(R,A \otimes_C \Oscr(Z)) \]
 and in Theorem~\ref{main2} we show that it is representable by an affine $C$-scheme $\wis{rep}_A(R)$, which we call the Azumaya representation scheme of $R$ with respect to $A$, with $C$-points $\wis{Rep}_R(A)$.

 \section{Representation stacks}

 Throughout, all algebras will be associative, unital $\C$-algebras which are finitely generated, that is, have a presentation
\[
R = \frac{\C \langle x_1,\hdots,x_k \rangle}{(p_i(x_1,\hdots,x_k)~|~ i \in I)} \]
In order to study the isomorphism classes of finite dimensional representations of $R$ one introduces the affine schemes $\wis{rep}_n~R$, for all $n$, with coordinate ring the quotient of the polynomial algebra in all entries of $k$ generic $n \times n$ matrices
\[
X_l = \begin{bmatrix} x_{11}(l) & \hdots & x_{1n}(l) \\
\vdots & & \vdots \\
x_{n1}(l) & \hdots & x_{nn}(l) \end{bmatrix} \]
modulo the ideal generated by all the matrix entries of the $(n \times n)$-matrices $p_i(X_1,\hdots,X_k)$ for all $i \in I$. That is,
\[
\Oscr(\wis{rep}_n~R) = \frac{\C[x_{ij}(l)~:~1 \leq i,j \leq n, 1 \leq l \leq k]}{(p_i(X_1,\hdots,X_k)_{uv}~:~1 \leq u,v \leq n,~i \in I)} \]
The affine group $\mathrm{GL}_n$ (or rather $\mathrm{PGL}_n$) acts via simultaneous conjugation on the generic matrices $X_i$ and hence on the affine scheme $\wis{rep}_n~R$. Its orbits on the $\C$-points of $\wis{rep}_n~R$ correspond to isomorphism classes of $n$-dimensional representations of $R$. 

From Mumford's geometric invariant theory we know that the $\C$-points of the quotient scheme $\wis{rep}_n R/\mathrm{PGL}_n$, corresponding to the ring of polynomial invariants $\Oscr(\wis{rep}_n R)^{\mathrm{PGL}_n}$, classify isomorphism classes of {\em closed orbits}. A result of M. Artin \cite{Artin1969} asserts that closed $\mathrm{PGL}_n$-orbits in $\wis{rep}_n~R$ are in one-to-one correspondence with isomorphism classes of {\em semi-simple} $n$-dimensional representations of $R$.

\vskip 3mm

In order to study also the non-semi-simple $n$-dimensional representations one has to extend the framework of affine schemes and consider the {\em quotient stack}, see \cite[\href{https://stacks.math.columbia.edu/tag/04SL}{Chapter 04SL}]{stacks-project}, $[\wis{rep}_n~R/\mathrm{PGL}_n]$, which we call the $n$-{\em representation stack} of $R$. By the general recipe of \cite[\href{https://stacks.math.columbia.edu/tag/04SL}{Chapter 04SL}]{stacks-project} this representation stack corresponds to the functor
\[
[\wis{rep}_n~R/\mathrm{PGL}_n]~:~\wis{affine}_{\C} \rTo \wis{groupoids} \qquad Z \mapsto [\wis{rep}_n~R/\mathrm{PGL}_n](Z) \]
from the category $\wis{affine}_{\C}$ of all affine $\C$-schemes of finite presentation to the category $\wis{groupoids}$ of all groupoids, that is categories in which every morphism is an isomorphism. This functor associates to an affine $\C$-scheme $Z$ the category $[\wis{rep}_n~R/\mathrm{PGL}_n](Z)$ with  objects all triples $(Z,P,\phi_P)$ with $\pi_P : P \rOnto Z$ a principal $\mathrm{PGL}_n$-fibration in the \'etale topology over $Z$ and $\phi_P$ a $\mathrm{PGL}_n$-equivariant map.
\[
\xymatrix{
P \ar[rr]^{\phi_P} \ar[d]^{\pi_P} & & \wis{rep}_n R \\
Z & & }
\]
and morphisms in this category consist of $\mathrm{PGL}_n$-equivariant maps $h$ making the diagram below commutative
\[
\xymatrix{& \wis{rep}_n~R & \\
P \ar[ru]^{\phi_P} \ar[rd]_{\pi_P} \ar[rr]^h & & P' \ar[lu]_{\phi_{P'}} \ar[ld]^{\pi_{P'}} \\
& Z & } \]
One easily verifies that $[\wis{rep}_n~R/\mathrm{PGL}_n](Z)$ is indeed a groupoid, called the {\em $Z$-points} of the quotient stack $[\wis{rep}_n~R/\mathrm{PGL}_n]$.

In \cite[Theorem 1]{LBstacks} a purely ring theoretical interpretation was given of the groupoid $[\wis{rep}_n~R/\mathrm{PGL}_n](Z)$. Recall that an $\Oscr(Z)$-algebra $A$ is said to be an {\em Azumaya algebra of degree $n$ over $\Oscr(Z)$} provided $A$ is a projective $\Oscr(Z)$-module of constant rank $n^2$ such that the natural $\Oscr(Z)$-algebra morphism $j : A \otimes_{\Oscr(Z)} A^{op} \rTo End_{\Oscr(Z)}(A)$ given by $j(a \otimes b)(r)=arb$ is an isomorphism. It follows that the center $Z(A)$ of $A$ is equal to $\Oscr(Z)$ and that $A/\mathfrak{m}A \simeq \mathrm{M}_n(\C)$ for every maximal ideal $\mathfrak{m}$ of $\Oscr(Z)$. That is, every $n$-dimensional representation of $A$ is simple and $\wis{rep}_n~A$ is a principal $\mathrm{PGL}_n$-fibration in the \'etale topology with quotient scheme $\wis{rep}_n~A/\mathrm{PGL}_n \simeq Z$, see for example \cite{KnusOjanguren}.

Therefore, any triple $(Z,P,\phi_P)$ in $[\wis{rep}_n~R/\mathrm{PGL}_n]$ corresponds uniquely to a pair $(A,\phi)$ where $A$ is an $\Oscr(Z)$-Azumaya algebra of degree $n$ and $\phi : R \rTo A$ is a $\C$-algebra morphism. Here, $P = \wis{rep}_n A$ and
\[
\phi_P~:~P=\wis{rep}_n~A \rTo \wis{rep}_n~R \]
is the $\mathrm{PGL}_n$-equivariant map induced (by composition) from the algebra map $\phi$. The fact that this category is a groupoid now translates into the classical result that any $\Oscr(Z)$-algebra morphism $\psi : A \rTo A'$ between two degree $n$ Azumaya algebras over $\Oscr(Z)$ is an isomorphism. 

\section{Azumaya geometry}
 
 Motivated by the description of representation stacks of $R$ in the previous section we will introduce in this section a seemingly innocuous extension of classical affine geometry $\wis{affine}_{\C}$ to the non-commutative realm.  
 
 For an affine scheme $Z$ and a degree $n$ Azumaya algebra $A$ over $\Oscr(Z)$ we know that the map $P \mapsto P \cap \Oscr(Z)$ gives a homeomorphism
 \[
 j_A~:~\mathrm{Spec}(A) \rTo Z \]
 between the prime spectrum $\mathrm{Spec}(A)$ of $A$, as in \cite{ProcesiBook}, and the prime spectrum of $\Oscr(Z)$ which is $Z$. As every localization of $A$ is a central localization, it follows that the noncommutative structure sheaf of $A$ over $\mathrm{Spec}(A)$, as in \cite{FVOVerschoren}, coincides with the pullback $j_A^* \Oscr_A$ of the structure sheaf $\Oscr_A$ over $Z$ of the $\Oscr(Z)$-algebra $A$. We will call the ringed space $\mathcal{A}=(\mathrm{Spec}(A),j_A^* \Oscr_A)$ the {\em non-commutative affine scheme} with coordinate ring $A$.
 
 With $\wis{Azumaya}_{\C}$ we will denote the category with objects all Azumaya algebras $A$ with center $Z(A)=\Oscr(Z)$, the coordinate ring of an affine $\C$-scheme $Z$ of finite presentation, and with morphisms $f : A \rTo A'$ all {\em center preserving} $\C$-algebra morphisms, that is we impose that $f(Z(A)) \subseteq Z(A')$.
 
 As any center preserving algebra morphism $j : A \rTo A'$ between Azumaya algebras is a {\em central extension} in the sense of C. Procesi, it follows from \cite{ProcesiBook} and \cite{FVOVerschoren} that it determines (and is determined by) a unique morphism of ringed spaces
 \[
 \mathcal{A}'=(\mathrm{Spec}(A'),j_{A'}^* \Oscr_{A'}) \rTo (\mathrm{Spec}(A),j_A^* \Oscr_A) = \mathcal{A} \]
 For this reason we can define {\em Azumaya affine geometry} as the opposite category of $\wis{Azumaya}_{\C}$
 \[
 \wis{Azufine}_{\C} \simeq (\wis{Azumaya}_{\C})^{op} \]
 Similar to how one obtains all schemes from gluing affine schemes, one can develop from these non-commutative affine Azumaya schemes a more general Azumaya geometry. Perhaps it is worth mentioning that moduli spaces of stable representations of finite dimensional $\C$-algebras come naturally equipped with a sheaf of Azumaya algebras and hence are objects in Azumaya geometry.
 
 \vskip 3mm
 
 Returning to the problem of classifying all finite dimensional representations of the non-commutative algebra $R$ geometrically, we see that all information contained in the representation stacks $[\wis{rep}_n~R/\mathrm{PGL}_n]$, for all $n$, is contained in the contravariant functor
 \[
 \wis{Rep}_R~:~\wis{Azufine}_{\C} \rTo \wis{sets} \qquad \mathcal{A} \mapsto \mathrm{Alg}_{\C}(R,A) \]
 which assigns to the affine Azumaya scheme $\mathcal{A}=(\mathrm{Spec}(A),j_A^* \Oscr_A)$ the set $\mathrm{Alg}_{\C}(R,A)$ of all $\C$-algebra morphisms from $R$ to $A$. In other words, $\wis{Rep}_R$ is an object in the {\em presheaf topos} on $\wis{Azufine}_{\C}$, see \cite{MM}. It therefore makes sense to investigate those Grothendieck topologies on $\wis{Azufine}_{\C}$ for which $\wis{Rep}_R$ is a sheaf.
 
 In classical affine geometry, apart from the Zariski topology also the \'etale topology is important with respect to Azumaya algebras as any Azumaya algebra is split by an \'etale cover. In the next section we will determine natural extensions of these two Grothendieck topologies on $\wis{affine}_{\C}$ to $\wis{Azufine}_{\C}$. 
 
 On $\wis{affine}_{\C}$ the classification of Grothendieck topologies is substantially simplified because we can reduce to a basis for the topology (see \cite[Def. III.2.2]{MM}), due to the fact that $\wis{affine}_{\C}$ has pullbacks, or equivalently, that commutative affine $\C$-algebras have pushouts (aka tensor products). We will now see that, although $\wis{Azumaya}_{\C}$ does allow for tensor products, these are not necessarily pushouts. So, in order to determine Grothendieck topologies on $\wis{Azufine}_{\C}$ we will have no other option than to use {\em sieves}.
 
 \begin{lemma} $\wis{Azumaya}_{\C}$ has tensor products but not pushouts. As a consequence $\wis{Azufine}_{\C}$ does not have pullbacks.
 \end{lemma}
 
 \begin{proof} If $f_i : A \rTo A_i$ are morphisms in $\wis{Azumaya}_{\C}$, then $A \otimes_{Z(A)} Z(A_i)$ is an $Z(A_i)$-Azumaya subalgebra of $A_i$, whence b the double centralizer theorem \cite[Thm. II.4.3]{DeMeyerIngraham} we have
 \[
 A_i \simeq (A \otimes_{Z(A)} Z(A_i)) \otimes_{Z(A_i)} A_i^A \simeq A \otimes_{Z(A)} A_i^A \]
 where $A_i^A = \{ a \in A_i~|~a \in Z(f_i(A)) \}$. But then,
 \[
 A_1 \otimes_A A_2 \simeq A_1^A \otimes_{Z(A)} A \otimes_A A \otimes_{Z(A)} A_2^A \simeq A_1^A \otimes_{Z(A)} A \otimes_{Z(A)} A_2^A \]
 which is isomorphic to $A_1 \otimes_{Z(A)} A_2^A$ and as $A_1$ and $A_2^A$ are separable over $Z(A)$ so is their tensor product, whence an Azumaya algebra over its center.
 
 However, tensor products are not pushouts in $\wis{Azumaya}_{\C}$. For example take an Azumaya algebra $A$ of degree $n > 1$ over $Z(A)$, then $A \otimes_{Z(A)} A$ s a degree $n^2$ Azumaya algebra over $Z(A)$ and hence does not allow for a $Z(A)$-algebra morphism $A \otimes_{Z(A)} A \rTo A$
 as would be required if it were the pushout when we consider the pair of morphisms $(id : A \rTo A, \alpha : A \rTo A)$ for $\alpha$ a $Z(A)$-algebra automorphism of $A$ (e.g. $\alpha=id$). 
  \end{proof}

 \section{Grothendieck topologies on \texorpdfstring{$\wis{Azufine}_{\C}$}{Azufine(C)}}
 
 Because $\wis{Azufine}_{\C}$ does not have pullbacks we will have to define {\em Grothendieck topologies} via {\em sieves} as in \cite[III.2]{MM}. As we prefer to retain an algebraic description we will work with the dual notions in $\wis{Azumaya}_{\C}$.
 
 \begin{definition} A {\em sieve} $S$ on an Azumaya algebra $A$ is a collection $S = \{ A \rTo^f A_f \}$ of morphisms in $\wis{Azumaya}_{\C}$ closed under post-composition. That is, for $f \in S$ and any $g : A_f \rTo B$ in $\wis{Azumaya}_{\C}$ also $g \circ f : A \rTo B \in S$.
 \end{definition}
 
 \begin{definition} \label{def:grothendieck-topology}
 A {\em Grothendieck topology} on $\wis{Azumaya}_{\C}$ is a function $J$ which assigns to each Azumaya algebra $A$ a collection $J(A)$ of sieves on $A$ satisfying the following properties
\begin{enumerate}
\item[(GT1)]{The maximal sieve $T_A = \{ f : A \rTo B \in \wis{Azumaya}_{\C} \}$ of all morphisms from $A$ is an element of $J(A)$}
\item[(GT2)]{Stability: If $S \in J(A)$, then for any morphism $h : A \rTo B$ in $\wis{Azumaya}_{\C}$, $h^{-1}(S)=\{ g: B \rTo D~:~g \circ h \in S\} \in J(B)$}
\item[(GT3)]{Transitivity: If $S \in J(A)$ and $R$ is a sieve on $A$ such that $h^{-1}(R) \in J(B)$ for all morphisms $h : A \rTo B$ in $S$, then $R \in J(A)$.}
\end{enumerate}
We say that a collection of morphisms $\{ A \rTo A_i \}_{i \in I}$ is a {\em cover} of $A$ with respect to the Grothendieck topology $J$ if these morphisms generate (by post-composition) a sieve in $J(A)$. 
\end{definition}

We will first give a combinatorial description of sieves and Grothendieck topologies on the
full subcategory $\wis{Mat}_{\C}$ of $\wis{Azumaya}_{\C}$ consisting of the matrix algebras $\mathrm{M}_n(\C)$ for all $n \in \mathbb{N}_+$. Let $\mathbb{N}_+^{\times}$ be the poset category with objects $n \in \mathbb{N}_+$ and morphisms $n \rTo m$ iff $n | m$. We have a projection functor $\pi : \wis{Mat}_{\C} \rTo \mathbb{N}^{\times}_+$ sending a morphism $\mathrm{M}_n(\C) \rTo \mathrm{M}_{nk}(\C)$ to $n \rTo nk$.

\begin{lemma} 
Sieves on $\mathrm{M}_n(\C)$ in $\wis{Mat}_{\C}$ are in bijection with sieves on $n$ in $\mathbb{N}^{\times}_+$ via $S \mapsto \pi(S)$. As a consequence, Grothendieck topologies on $\wis{Mat}_{\C}$ are in bijection with Grothendieck topologies on $\mathbb{N}^{\times}_+$.\end{lemma}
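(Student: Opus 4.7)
The plan rests on one classical fact: by the Skolem--Noether theorem, any two unital $\C$-algebra morphisms $M_n(\C) \rTo M_m(\C)$ differ by post-composition with an (inner) automorphism of $M_m(\C)$. Since morphisms from $M_n(\C)$ to $M_m(\C)$ in $\wis{Mat}$ exist precisely when $n \mid m$, and sieves in $\wis{Mat}^\op$ are closed under post-composition in $\wis{Mat}$, this fact will force a sieve $S$ on $M_n(\C)$ to be determined entirely by the set of integers $m$ for which $S$ contains \emph{some} morphism with target $M_m(\C)$.

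First, functoriality of $\pi : \wis{Mat} \rTo \wis{D}^\op$ shows that if $S$ is a sieve on $M_n(\C)$, then $\pi(S)$ is closed under pre-composition in $\wis{D}$ and hence is a sieve on $n$. Conversely, for a sieve $T$ on $n$ in $\wis{D}$, I would define
\[ \widetilde{T} \;=\; \{\, f : M_n(\C) \rTo M_m(\C) \text{ in } \wis{Mat} \mid (m \rTo n) \in T \,\}. \]
The sieve property of $T$ makes $\widetilde{T}$ closed under post-composition in $\wis{Mat}$ (post-composing by $M_m(\C) \rTo M_{m\ell}(\C)$ brings us to an integer $m\ell$ with $(m\ell \rTo n) \in T$), so $\widetilde{T}$ is a sieve. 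The existence of the block-diagonal embedding $M_n(\C) \rInto M_m(\C)$ for every $m$ with $n \mid m$ gives $\pi(\widetilde{T}) = T$.

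The one substantive point is the reverse round-trip $\widetilde{\pi(S)} = S$. The inclusion $S \subseteq \widetilde{\pi(S)}$ is tautological. For the reverse, suppose $f' : M_n(\C) \rTo M_m(\C)$ satisfies $\pi(f') \in \pi(S)$; then there is some $f \in S$ with the same source and target as $f'$, and Skolem--Noether gives an automorphism $\phi$ of $M_m(\C)$ with $f' = \phi \circ f$, whence $f' \in S$ by post-composition closure. I expect this Skolem--Noether step to be the main (and really the only non-formal) obstacle.

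For the consequence on Grothendieck topologies, I would simply transport the three axioms along the bijection. The maximal sieve on $M_n(\C)$ corresponds under $\pi$ to the maximal sieve on $n$, and compatibility with pullback follows from the equivalence $g \circ h \in S \iff \pi(g) \circ \pi(h) \in \pi(S)$, which yields $\pi(h^{-1}(S)) = \pi(h)^{-1}(\pi(S))$ for any $h : M_n(\C) \rTo M_m(\C)$. Consequently the maximality, stability, and transitivity axioms translate verbatim and Grothendieck topologies on $\wis{Mat}^\op$ correspond bijectively to Grothendieck topologies on $\wis{D}$.
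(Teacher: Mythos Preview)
Your proof is correct and follows essentially the same approach as the paper: the key step in both is that any two morphisms $M_n(\C) \rTo M_{nk}(\C)$ differ by an automorphism of the target, so a sieve is determined entirely by the set of degrees it hits. You invoke Skolem--Noether for this while the paper cites the double centralizer theorem, and you spell out the inverse map $T \mapsto \widetilde{T}$ and the transport of the Grothendieck topology axioms more explicitly than the paper does, but the argument is the same.
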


\begin{proof} The result follows if we can show that a sieve $S$ on $\mathrm{M}_n(\C)$ is fully determined by the multiples of $n$ such that there is a morphism $\alpha : \mathrm{M}_n(\C) \rTo \mathrm{M}_{nk}(\C) \in S$ and not on the actual morphism $\alpha$. So, let $\beta : \mathrm{M}_n(\C) \rTo \mathrm{M}_{nk}(\C)$ be another morphism, then it follows from the double centralizer theorem that there is an automorphism $\gamma$ of $\mathrm{M}_{nk}(\C)$ such that $\gamma \circ \alpha = \beta$. But then we have
\[
\alpha \in S \Leftrightarrow \beta \in S. \]
\end{proof}

Grothendieck topologies on $\mathbb{N}^{\times}_+$ have been studied in \cite{LBas2} in connection with the Arithmetic Site of Connes and Consani \cite{CC}. Sieves $S$ on $n$ correspond one-to-one with ideals $M= \cup_i n_i \mathbb{N}_+$ of the multiplicative monoid $\mathbb{N}^{\times}_+$ via $n \rTo nk \in S$ iff $k \in M$. Further, if $h : n \rTo n'$ is a morphism in $\mathbb{N}^{\times}_+$ and a sieve $S$ corresponds to $M = \cup_i n_i \mathbb{N}_+$, then $h^{-1}(S)$ corresponds to $\cup_i \lcm(n',n_i) \mathbb{N}_+$. 

For our purposes Grothendieck topologies {\em of finite type} on $\mathbb{N}^{\times}_+$ will be important, that is, those for which every covering sieve contains a finitely generated covering sieve. These have been classified in \cite{Hemelaer} to be in one-to-one correspondence with {\em patches} in $\mathbb{S}$, the multiplicative monoid of {\em supernatural numbers}. Recall that $\mathbb{S}$ consists of all formal products $s = \prod_p p^{e_p}$ over all primes $p$ with each $e_p \in \N \cup \{ \infty \}$ with obvious multiplication.
There are uncountably many finite type Grothendieck topologies on $\mathbb{N}^{\times}_+$, see \cite[Example 2.7]{Hemelaer} for some classes of examples.

 \begin{example} \label{eg:sigma-topologies}
Consider a set $\Sigma$ of prime numbers. To this set we can associate a Grothendieck topology $K_{\Sigma}$ on $\mathbb{N}^{\times}_+$ by taking for the collection $K_{\Sigma}(n)$ of the sieves on $n$ all sieves corresponding to ideals $M = \cup_i n_i \mathbb{N}_+$ such that at least one $n_i$ has all its prime divisors in $\Sigma$. It is easy to check that $K_\Sigma$ defines a Grothendieck topology on $\mathbb{N}^{\times}_+$. Moreover $K_\Sigma = K_{\Sigma'}$ implies that $\Sigma = \Sigma'$, for two sets of primes $\Sigma$ and $\Sigma'$.

As the collection of ideals describing the sieves on elements is equal for all $n \in \mathbb{N}_+$, these Grothendieck topologies are {\em stable under multiplication}, that is they have the property that if $\{ n \rTo n_r \}_{r \in R}$ is in $K_{\Sigma}(n)$ and if $k$ is any positive integer, then $\{ nk \rTo n_r k \}_{r \in R} \in K_{\Sigma}(nk)$. If in particular $\Sigma = \mathbb{P}$ is the set of all prime numbers, then the corresponding Grothendieck topology $K_+ = K_{\mathbb{P}}$ will be called the {\em maximal topology} on $\mathbb{N}^{\times}_+$. In contrast, the topology $K_- = K_\emptyset$ corresponding to the empty set will be called the \emph{minimal topology} on $\mathbb{N}^\times_+$.
\end{example}

\vskip 3mm

Having determined suitable Grothendieck topologies $K$ on $\wis{Mat}_{\C}$, the other ingredient in defining Grothendieck topologies on $\wis{Azumaya}_{\C}$ will be a Grothendieck topology $J$ on $\wis{commalg}_{\C}=\wis{affine}_{\C}^{op}$, the category of all affine (i.e.~finitely presented) commutative $\C$-algebras. Such a $J$ assigns to each $C \in \wis{commalg}_{\C}$ a collection of covers $\{ C \rTo C_i \}_{i \in I}$.
Important examples are the {\em Zariski topology} on $\wis{commalg}_{\C}$ generated by the covers
\[
\{ C \rTo C[c_i^{-1}] \}_{i \in I} \quad\text{with}\quad\sum_{i \in I} c_iC = C, \]
the {\em \'etale topology} generated by the covers
\begin{gather*}
\{ C \rTo C[c_i^{-1}] \rTo D_i \}_{i \in I} \quad\text{with}\quad \sum_{i \in I} c_iC=C\quad \\ \text{and}\quad\mathrm{Spec}(D_i) \rTo \mathrm{Spec}(C[c_i^{-1}])~\text{\'etale surjections},
\end{gather*}
and the {\em flat topology} generated by the covers
\[
\{ C \rTo C_i \}_{i \in I} \quad\text{with}\quad C \rTo \prod_{i \in I} C_i~\text{faithfully flat},~ I \text{ finite}. \]
Note that Grothendieck topologies are sometimes defined on the category of \emph{all} commutative rings. Then there are two notions of flat topology (called fppf and fpqc in the literature). However, we will follow the approach of \cite{gabber-kelly}, by restricting to finitely presented rings, so the two notions coincide.

Now we are ready to construct Grothendieck topologies on $\wis{Azumaya}_{\C}$. Let $K$ be a Grothendieck topology on $\mathbb{N}^{\times}_+$. Let $A$ be an Azumaya algebra with center $C$, and take a sieve 
\begin{equation}
L = \{ A \to A_i \}_{i \in I}.
\end{equation}
Let $f : C \to D$ be a morphism of commutative rings. 

\begin{definition}
Take $K$, $L$ and $f$ be as above. Then we say that $m \in \mathbb{N}_+$ is \emph{represented} on $f$ if $L$ contains a central extension of $A \otimes_C D$ that is of constant degree $m$ over its center. If we can find such a central extension by a matrix algebra, then we say that $m$ is \emph{represented by a matrix algebra} on $f$. We say that $f$ is \emph{centrally covered} if $A \otimes_C D$ is of constant degree $n$, and the represented numbers on $f$ form a $K$-covering sieve on $n$. We now define:
\begin{equation}
\pi_K(L) = \{ f : C \to D \text{ such that } f \text{ is centrally covered w.r.t.\ }K\text{ and }L \}.
\end{equation}
Similarly, we say that $f$ is \emph{centrally covered by matrix algebras} if $A \otimes_C D$ is isomorphic to a matrix algebra of constant degree $n$, and the numbers that are represented by a matrix algebra on $f$, form a $K$-covering sieve on $n$. We then define:
\begin{equation}
\Pi_K(L) = \left\{ \substack{f ~:~ C ~\to~ D \text{ such that } f \text{ is centrally } \\ \text{covered by matrix algebras w.r.t.\ }K\text{ and }L} \right\}.
\end{equation}
\end{definition}

\begin{definition} \label{def:JK}
Consider $(J,K)$ with $J$ a Grothendieck topology on $\wis{commalg}_{\C}$ and $K$ a Grothendieck topology on $\mathbb{N}^{\times}_+$. Let $L = \{ A \to A_i \}_{i \in I}$ be a sieve on $A$. Then we say that $L$ is a $J_K$-covering sieve if $\pi_K(L)$ as above is a $J$-covering sieve.
\end{definition}

For example, take a sieve $S = \{ C \to C_i \}_{i \in I}$, and let $\tilde{S}$ be the sieve in $\wis{Azumaya}_{\C}$ generated by $S$. Suppose that every covering sieve in $K$ is nonempty. Then:
\begin{equation}
\Pi_K(\tilde{S}) ~=~ \pi_K(\tilde{S}) ~=~ S.
\end{equation}
In particular, $\tilde{S}$ is a $J_K$-covering sieve if and only if $S$ is a $J$-sieve. 

We now want to show that under certain conditions the collection of $J_K$-covering sieves is a Grothendieck topology on $\wis{Azumaya}_{\C}$.

\begin{theorem} \label{main0} Let $J$ be a Grothendieck topology on $\wis{commalg}_{\C}$ and $K$ a Grothendieck topology on $\mathbb{N}^{\times}_+$. Suppose that $K$ is of finite type. Further, suppose that
\begin{enumerate}
\item $J$ is finer than the \'etale topology, or
\item $J$ is finer than the Zariski topology and $K$ is stable under multiplication.
\end{enumerate}
Then the collection of $J_K$-covering sieves of Definition \ref{def:JK} defines a Grothendieck topology on $\wis{Azumaya}_{\C}$.
\end{theorem}
\begin{proof}[Proof] 
We prove the three axioms for a Grothendieck topology, see Definition \ref{def:grothendieck-topology}.

(GT1). If $A$ is Azumaya with center $C$, then the maximal sieve $L$ on $A$ is a $J_K$-covering sieve. Indeed, let $f : C \to D$ be any morphism of commutative rings such that $A \otimes_C D$ is of constant degree. Then $f$ is centrally covered, i.e.\ $f \in \pi_K(L)$. So $\pi_K(L)$ contains a Zariski covering, in particular it is a $J$-covering sieve.

(GT2). Let $L=\{ A \to A_i \}_{i \in I}$ be a $J_K$-covering sieve, and let $\phi : A \to A'$ be a morphism of Azumaya algebras, inducing a morphism $\phi_0 : C \to C'$ on centers. We need to show that $\phi^{-1} L$ is again a $J_K$-covering sieve. We treat the cases (1) and (2) from the theorem statement separately.

\underline{Case (1)}. We claim that $\Pi_K(L)$ is a $J$-covering sieve. It is enough to show that $f^{-1}\Pi_K(L)$ is a $J$-covering sieve, for each $f \in \pi_K(L)$. So take such a centrally covered morphism $f : C \to D$. The represented numbers on $f$ form a $K$-covering sieve, and we can take a finitely generated covering sieve $(m_1,\dots,m_k)$ contained in it, with each $m_i$ represented by an Azumaya algebras $A_i$ with center $D$. Take an \'etale cover $g:D \to E$ trivializing $A_1,\dots,A_k$. Then $g$ is contained in $f^{-1}\Pi_K(L)$, so $f^{-1}\Pi_K(L)$ is an \'etale covering sieve, so in particular it is a $J$-covering sieve. This shows the claim.

Now take a morphism $g : C' \to D$ such that $g \circ \phi_0$ is centrally covered by matrix algebras, and such that $A' \otimes_{C'} D$ is isomorphic to a matrix algebra. Then $g$ is itself centrally covered with respect to $\phi^{-1}L$. So there is an inclusion
\begin{equation}
M \cap \phi_0^{-1}\Pi_K(L) \subseteq \Pi_K(\phi^{-1}L)
\end{equation}
where $M$ is an \'etale covering sieve trivializing $A'$. This shows that $\Pi_K(\phi^{-1}L)$ is a $J$-covering sieve, so $\phi^{-1}L$ is a $J_K$-covering sieve.

\underline{Case (2)}. Let $M$ be a Zariski covering sieve on $C'$ such that for $g : C' \to D$ in $M$ we have that $A \otimes_C D$ and $A' \otimes_{C'} D$ are of constant degree. Then because $K$ is stable under multiplication, it is easy to see that
\begin{equation}
M \cap \phi_0^{-1}\pi_K(L) \subseteq \pi_K(\phi^{-1}L).
\end{equation}
(use the tensor product $-\otimes_A A'$). So $\phi^{-1}L$ is a $J_K$-covering sieve.

(GT3). Let $M$ be a $J_K$-covering sieve and suppose that $h^{-1}L$ is a $J_K$-covering sieve for all $h \in M$. We need to show that $L$ is a $J_K$-covering sieve.

Take $f : C \to D$ with $f \in \pi_K(M)$. The representable numbers on $f$ form a $K$-covering sieve, so take a finitely generated covering sieve $(m_1,\dots,m_k)$ contained in it, with each $m_i$ represented by an Azumaya algebra $A_i$ with center $D$. For the corresponding morphisms $f_i : A \to A_i$ we have that $\pi_K(f_i^{-1}L)$ is a $J$-covering sieve. Moreover, if for each $i \in \{1,\dots,k\}$, $g:D \to E$ is centrally covered w.r.t.\ $f_i^{-1}L$, then $g \circ f$ is centrally covered w.r.t.\ $L$. In other words, we have an inclusion
\begin{equation}
\bigcap_{i=1}^m \pi_K(f_i^{-1}L) \subseteq f^{-1}\pi_K(L)
\end{equation}
and because $\bigcap_{i=1}^m \pi_K(f_i^{-1}L)$ is a $J$-covering sieve, this shows that $f^{-1}\pi_K(L)$ is a $J$-covering sieve too. Moreover, this holds for every $f \in \pi_K(M)$. We conclude that $\pi_K(L)$ is a $J$-covering sieve, i.e.\ $L$ is a $J_K$-covering sieve.
\end{proof}

Suppose that every $K$-covering sieve is nonempty. Let $S = \{C \to C_i \}_{i \in I}$ be a sieve in $\wis{commalg}_{\C}$ and let $\tilde{S}$ be the sieve generated by it in $\wis{Azumaya}_{\C}$. We showed that $S$ is a $J$-covering sieve if and only if $\tilde{S}$ is a $J_K$-covering sieve. So we can recover $J$ from $J_K$. If moreover every $J$-covering sieve is nonempty, then we can similarly recover $K$ from $J_K$.

As concrete examples of Grothendieck topologies on $\wis{Azumaya}_{\C}$, we can take $J$ to be the Zariski or \'etale topology, and $K = K_\Sigma$ one of the topologies from Example~\ref{eg:sigma-topologies}. This gives uncountably many extensions of $J$ to a Grothendieck topology on $\wis{Azumaya}_{\C}$ (one for each choice of $\Sigma$). 
  
 \section{Azumaya representation schemes}
 
 Having constructed Grothendieck topologies on $\wis{Azufine}_{\C}$, let us return to representation stacks and consider the problem for which of these Grothendieck topologies the presheaf functor
  \[
 \wis{Rep}_R~:~\wis{Azufine}_{\C} \rTo \wis{sets} \qquad \mathcal{A} \mapsto \mathrm{Alg}_{\C}(R,A) \]
 which assigns to the affine Azumaya scheme $\mathcal{A}=(\mathrm{Spec}(A),j_A^* \Oscr_A)$ the set $\mathrm{Alg}_{\C}(R,A)$, is actually a sheaf.

 \begin{theorem} \label{main1}
$\wis{Rep}_R$ is a sheaf for any Grothendieck topology on $\wis{Azufine}_{\C}$ coarser than the maximal flat topology.
\end{theorem}

\begin{proof}
Let $S = \{ A \to A_i \}_{i \in I}$ be a covering sieve for the maximal flat topology. Then, by definition, there is a finite subset $J \subseteq I$ such that $A_j$ is of constant degree over its center $Z(A_j)$ for all $j \in J$ and such that the induced morphism $Z(A) \to \prod_{j \in J} Z(A_j)$ is faithfully flat. To show that the $\mathrm{Alg}_{\C}(R,-)$ is a sheaf with respect to $S$, it is enough to prove that
\begin{equation*}
\begin{tikzcd}
0 \ar[r] & \mathrm{Alg}_{\C}(R,A) \ar[r] & \prod_{i\in J}\mathrm{Alg}_{\C}(R,A_i) \ar[r,shift left] \ar[r,shift right] & \prod_{i,j\in J}\mathrm{Alg}_{\C}(R,A_i\otimes_A A_j)
\end{tikzcd}
\end{equation*}
is exact. We know that $\mathrm{Alg}_{\C}(R,-)$ commutes with limits of rings (in particular with categorical kernels and products), so it is enough to show that
\begin{equation*}
\begin{tikzcd}
0 \ar[r] & A \ar[r] & \prod_{i\in J} A_i \ar[r,shift left] \ar[r,shift right] & \prod_{i,j\in J} A_i\otimes_A A_j
\end{tikzcd}
\end{equation*}
is exact. But this follows from the following lemma which we include for lack of a reference.
\end{proof}

\begin{lemma} \label{lmm:ff}
Let $A \to B$ be a morphism in $\wis{Azumaya}_{\C}$. Then the following are equivalent:
\begin{enumerate}
\item $A \to B$ is left faithfully flat; \label{leftff}
\item $A \to B$ is right faithfully flat; \label{rightff}
\item $Z(A) \to Z(B)$ is faithfully flat; \label{centerff}
\end{enumerate}
Moreover, if any of the above is satisfied, then the sequence
\[
\begin{tikzcd}
0 \ar[r] & A \ar[r] & B \ar[r,shift left] \ar[r,shift right] & B \otimes_A B
\end{tikzcd}
\]
is exact.
\end{lemma}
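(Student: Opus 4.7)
Write $C = Z(A)$ and $D = Z(B)$, so the given morphism $A \to B$ is $C$-linear and induces an inclusion $C \hookrightarrow D$. The structural input is the double centralizer theorem, already used in the proof of Proposition~\ref{prop:pushouts}: applied to the $D$-Azumaya subalgebra $A \otimes_C D \subseteq B$, it yields that $B^A$ is Azumaya over $D$ and that multiplication gives algebra isomorphisms
\[
A \otimes_C B^A \;\xrightarrow{\sim}\; B \;\xleftarrow{\sim}\; B^A \otimes_C A.
\]
In particular $B^A$ is faithfully flat over $D$ (being Azumaya over it), and these isomorphisms identify $B$ with $A \otimes_C B^A$ as a left $A$-module and with $B^A \otimes_C A$ as a right $A$-module, the $A$-action in each case being on the $A$-factor.

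The plan rests on a key observation: for any $C$-module $M$, the left $A$-module $A \otimes_C M$ is faithfully flat over $A$ if and only if $M$ is faithfully flat over $C$, and analogously on the right. One direction uses the natural identification $N \otimes_A (A \otimes_C M) \cong N \otimes_C M$ for right $A$-modules $N$; the converse tests flatness and faithfulness of $M$ against an arbitrary $C$-module $N$ by first replacing it with the right $A$-module $N \otimes_C A$, leveraging that $A$ itself is faithfully flat over $C$ as an Azumaya algebra.

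With this in hand the equivalences of (1), (2), (3) follow by propagating faithful flatness along $C \to D \to B^A$. For (3)$\Rightarrow$(1): composition of the faithfully flat maps $C \to D$ and $D \to B^A$ gives $B^A$ faithfully flat over $C$, and the key observation then yields left $A$-faithful flatness of $B \cong A \otimes_C B^A$. For (1)$\Rightarrow$(3): the key observation forces $B^A$ faithfully flat over $C$; combined with faithful flatness of $B^A$ over $D$, standard faithfully flat descent along $D \to B^A$ (flatness: a $C$-module sequence is exact after $-\otimes_C D$ iff after $-\otimes_C B^A$, since $(-\otimes_C D)\otimes_D B^A = -\otimes_C B^A$; faithfulness: $N \otimes_C D = 0 \Rightarrow N \otimes_C B^A = 0 \Rightarrow N = 0$) forces $D$ faithfully flat over $C$. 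The equivalence (2)$\Leftrightarrow$(3) is obtained by the symmetric argument using $B \cong B^A \otimes_C A$ as a right $A$-module.

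For exactness of the Amitsur-type sequence I would apply the exact functor $B \otimes_A -$ (using right $A$-flatness of $B$) to the sequence, obtaining
\[
0 \to B \xrightarrow{\,b \,\mapsto\, b \otimes 1\,} B \otimes_A B \xrightarrow{\,d'\,} B \otimes_A B \otimes_A B, \qquad d'(b'' \otimes b) = b'' \otimes 1 \otimes b - b'' \otimes b \otimes 1.
\]
Exactness of this new sequence is immediate from the $B$-bimodule retraction $\sigma = \mu \otimes \mathrm{id} : B \otimes_A B \otimes_A B \to B \otimes_A B$ (with $\mu$ the multiplication $b \otimes b' \mapsto bb'$), which satisfies $\sigma \circ d'(x) = x - \mu(x) \otimes 1$, so $d'(x) = 0$ forces $x = \mu(x) \otimes 1$; injectivity of $b \mapsto b \otimes 1$ is witnessed by $\mu$ itself. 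Faithful flatness of $B$ as a right $A$-module then reflects exactness back to the original sequence. The one point to watch is that $B \otimes_A B$ is not an algebra (the image of $A$ is non-central in $B$), but $\mu$ and $\sigma$ remain well-defined $B$-bimodule maps, which is all that is needed.
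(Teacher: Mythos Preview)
Your proof is correct and follows essentially the same approach as the paper: both use the double centralizer decomposition $B \cong A \otimes_C B^A$ to reduce the flatness equivalences to the commutative situation, and both verify the Amitsur sequence by tensoring up to $B$ and using the multiplication map as a retraction. The only difference is organizational: for (1)$\Leftrightarrow$(3) the paper directly factors the functor as $- \otimes_A B \cong (- \otimes_C D) \otimes_D B^A$ and reads off the equivalence from faithful flatness of the second factor, whereas you pass through faithful flatness of $B^A$ over $C$ and then invoke a short descent argument to get back to $C \to D$; both routes are valid and equally short.
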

\begin{proof}
(1) $\Leftrightarrow$ (3): by the double centralizer theorem, the functor $- \otimes_A B$ is equivalent to $- \otimes_{Z(A)} Z(B) \otimes_{Z(B)} B^A$. Because $B^A$ is always faithfully flat over its center $Z(B)$, we get that $A \to B$ is left faithfully flat if and only if $Z(A) \to Z(B)$ is faithfully flat.
(2) $\Leftrightarrow$ (3): analogously.

The sequence in the lemma appeared in \cite{Artin1969} and is a non-commutative version of the Amitsur complex. By faithfully flatness, it is enough to check that
\begin{equation}
\begin{tikzcd}
0 \ar[r] & B \ar[r,"{b \mapsto b \otimes 1}"] & B \otimes_A B \ar[r,shift left,"{b\otimes b' \mapsto b \otimes b' \otimes 1}"] \ar[r,shift right,"{b\otimes b' \mapsto b \otimes 1 \otimes b'}"'] & B \otimes_A B \otimes_A B
\end{tikzcd}
\end{equation}
is exact. The morphism $B \to B \otimes_A B$ has a retraction given by the multiplication morphism. In particular it is injective. Further, suppose $\sum_i b_i \otimes b'_i \otimes 1 = \sum_i b_i \otimes 1 \otimes b'_i$. Applying multiplication to the first two tensor factors, we get that $\sum b_i b'_i \otimes 1 = \sum_i b_i \otimes b'_i$. But this means that $\sum_i b_i \otimes b'_i$ lies in the image of $B \to B\otimes_A B$.
\end{proof}

In particular, $\wis{Rep}_R$ will be a sheaf with respect to all Grothendieck topologies $J_K$ on $\wis{Azufine}_{\C}$ where $J$ is the Zariski or \'etale topology on $\wis{affine}_{\C}$ and $K=K_{\Sigma}$ for any subset $\Sigma \subseteq \mathbb{P}$ of prime numbers, as in Example \ref{eg:sigma-topologies}.

\vskip 3mm

Let us now consider a fixed Azumaya algebra $A$ with center $C$. Let $\wis{Azumaya}_A$ be the comma category of all Azumaya algebras $B$ with a morphism $A \rTo B$ in $\wis{Azumaya}_{\C}$ and with morphisms $B \rTo B'$ making the triangle
\begin{equation*}
\begin{tikzcd}[row sep=tiny]
 & B \ar[dd] \\
A \ar[ru] \ar[rd] & \\
& B'
\end{tikzcd}
\end{equation*}
commute. Grothendieck topologies restrict to comma categories and we can consider the composition of functors (for example with respect to the maximal flat topology on $\wis{Azumaya}_{\C}$) between the sheaf toposes
\[
\wis{Sh}(\wis{Azufine}_{\C}) \rTo \wis{Sh}(\wis{Azufine}_A) \rTo \wis{Sh}(\wis{Azufine}_C) \rTo \wis{Sh}(\wis{affine}_C) \]
where the middle arrow is given by $F \mapsto F(A \otimes_C -)$ and the others are given by restriction. As a consequence we have

\begin{proposition} The image of the sheaf $\wis{Rep}_R$ along this composition gives the contravariant functor
\[
\wis{Rep}_A(R)~:~\wis{affine}_C \rTo \wis{sets} \qquad Z \mapsto \mathrm{Alg}_{\C}(R,A \otimes_C \Oscr(Z)) \]
and is therefore a sheaf for the flat topology on $\wis{affine}_C$, and consequently also for the \'etale and the Zariski topology.
\end{proposition}

Our next aim is to prove that this functor is representable by an affine $C$-scheme which we will call the {\em Azumaya representation scheme} of $R$ with respect to the $C$-Azumaya algebra $A$.

For any $\C$-algebra $S$, Artin $S$-bimodules (see \cite{Artin1969} or \cite{ProcesiBook}) are vector spaces $M$ equipped with compatible left and right $S$-action, and generated by invariants $M^S$ as a two-sided $S$-module. Artin $S$-algebras are algebras $R$ equipped with a structure morphism $\phi_R: S \to R$ making $R$ into an Artin bimodule. Equivalently, $\phi_R$ is a Procesi extension \cite{ProcesiBook}.
We will denote by $\wis{Bimod}_S$ the category of Artin $S$-bimodules with morphisms that are $S$-linear on both sides. Similarly, $\wis{Alg}_S$ will denote the category of Artin $S$-algebras with $S$-linear algebra morphisms.

Now let $C$ be a commutative algebra and $A$ an Azumaya algebra over $C$. Note that this makes $A$ into an Artin $C$-algebra. In \cite{Artin1969} it is shown that there are equivalences of categories
\begin{equation} \label{eq:bimodC-bimodA}
\begin{tikzcd}
\wis{Bimod}_C \ar[r,"{A \otimes_C -}", bend left] & \wis{Bimod}_A \ar[l,"{(-)^A}",bend left]
\end{tikzcd},
\end{equation}
\begin{equation} \label{eq:algC-algA}
\begin{tikzcd}
\wis{Alg}_C \ar[r,"{A \otimes_C -}", bend left] & \wis{Alg}_A \ar[l,"{(-)^A}",bend left]
\end{tikzcd}.
\end{equation}
Observe that in the case of an Azumaya algebra $A$ we can reformulate Artin's definition, by invoking the double centralizer theorem. For an Azumaya $A$ with center $C$, Artin $A$-bimodules are the ones such that the induced $C$-action is symmetric. Similarly, Artin $A$-algebras are the algebras with structure morphism sending $C$ into the center.

In order to describe the functor $\wis{Rep}_R(A)$, we have to introduce a generalization of the root algebra $\sqrt[n]{R}$, used in studying $n$-dimensional representations of $R$, see \cite{Bergman} or \cite{Schofield}. Note that morphisms $R \to A$ with $A$ an Azumaya over $C$ are the same as $C$-algebra morphisms $R \otimes C \to A$, so we may assume that $R$ is a $C$-algebra.

\begin{definition}
Let $A$ be an Azumaya algebra with center $C$ and let $R$ be a $C$-algebra. Then the \emph{$A$-th root algebra} of $R$, denoted $\sqrt[A]{R}$, is defined to be
\begin{equation}
\sqrt[A]{R} = (R \ast_C A)^A.
\end{equation}
Here $\ast_C$ denotes the coproduct of $C$-algebras, i.e.\ the pushout of the diagram
\begin{equation*}
\begin{tikzcd}
C \ar[r] \ar[d] & R \\
A &
\end{tikzcd}
\end{equation*}
in the category of rings.
\end{definition}

\begin{proposition} \label{prop:adjoint-root-tensor}
The functor $\sqrt[A]{-} : \wis{Alg}_C \to \wis{Alg}_C$ is left adjoint to tensoring $-\otimes_C A : \wis{Alg}_C \to \wis{Alg}_C$.
\end{proposition}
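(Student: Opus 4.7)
The plan is to construct the adjunction bijection
\[
\Hom_{\wis{Alg}_C}(\sqrt[A]{R},S) \;\cong\; \Hom_{\wis{Alg}_C}(R, S\otimes_C A)
\]
by stringing together three natural isomorphisms: the equivalence of (\ref{eq:algC-algA}), the universal property of the coproduct $\ast_C$, and the symmetry of the tensor product over $C$.

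First I would rewrite the left-hand side using the category equivalence $(-)^A : \wis{Alg}_A \to \wis{Alg}_C$ with quasi-inverse $A\otimes_C -$, applied to the Artin $A$-algebra $R \ast_C A$ (note $A$ sits inside $R \ast_C A$ via the coproduct structure map, so $R \ast_C A$ is naturally an object of $\wis{Alg}_A$, and by Proposition~\ref{prop:pushouts}-style reasoning its centre is a $C$-algebra). This yields
\[
\Hom_{\wis{Alg}_C}\bigl((R\ast_C A)^A,\,S\bigr)\;\cong\;\Hom_{\wis{Alg}_A}\bigl(R\ast_C A,\,A\otimes_C S\bigr),
\]
naturally in $S$. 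Next I would invoke the universal property of the pushout in the category of $C$-algebras: morphisms $R\ast_C A \to T$ of $A$-algebras are in natural bijection with $C$-algebra morphisms $R \to T$, where $T$ is regarded as a $C$-algebra by restriction of scalars along $C\to A\to T$. Applying this with $T = A\otimes_C S$ gives
\[
\Hom_{\wis{Alg}_A}\bigl(R\ast_C A,\,A\otimes_C S\bigr)\;\cong\;\Hom_{\wis{Alg}_C}\bigl(R,\,A\otimes_C S\bigr),
\]
and finally the canonical $C$-algebra isomorphism $A\otimes_C S \cong S\otimes_C A$ delivers the right-hand side. Naturality in $R$ comes from functoriality of $\ast_C A$ and $(-)^A$; naturality in $S$ comes from functoriality of $A\otimes_C -$ and of restriction of scalars.

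The main point to verify carefully is the middle step: one must check that when $T$ is an Artin $A$-algebra, a $C$-algebra map $R\to T$ really does extend uniquely to a $C$-algebra map $R\ast_C A \to T$ that is $A$-linear on the $A$-factor, and that the resulting map lands in $\wis{Alg}_A$ rather than merely $\wis{Alg}_C$. This is essentially the universal property of the pushout in commutative/non-commutative ring theory, but one should confirm that $R \ast_C A$ is automatically an Artin $A$-algebra so that the equivalence (\ref{eq:algC-algA}) applies -- this follows because the image of $C$ lies in the image of $A$, hence in the centre of $R\ast_C A$ followed by taking $A$-invariants. Once these compatibilities are in place the three bijections compose to the desired adjunction isomorphism.
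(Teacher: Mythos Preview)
Your argument is correct and is essentially the paper's proof unpacked at the level of hom-sets: the paper simply observes that $-\otimes_C A$ factors as the equivalence $A\otimes_C - : \wis{Alg}_C \to \wis{Alg}_A$ followed by the forgetful functor, and then composes the two left adjoints $(-)^A$ and $A\ast_C -$, which is exactly what your three bijections amount to. One cosmetic remark: the appeal to Proposition~\ref{prop:pushouts} is misplaced (that result is about tensor products of Azumaya algebras, whereas $R\ast_C A$ is rarely Azumaya); what you actually need, and what suffices, is the elementary observation that the image of $C$ is central in $R\ast_C A$, so that $R\ast_C A$ is an Artin $A$-algebra in the sense used here.
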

\begin{proof}
Note that we can write the functor $A\otimes_C - : \wis{Alg}_C \to \wis{Alg}_C$ as a composition
\begin{equation}
\begin{tikzcd}
\wis{Alg}_C \ar[r,"{A \otimes_C -}"] & \wis{Alg}_A \ar[r] & \wis{Alg}_C,
\end{tikzcd}
\end{equation}
where the first functor is the equivalence (\ref{eq:algC-algA}) and the second functor is the forgetful one. Being an equivalence, the first one has its quasi-inverse $(-)^A$ as left adjoint. Further, one can check that the second one has left adjoint $A \ast_C -$. The proposition follows from composition of adjunctions.
\end{proof}

\begin{proposition} \label{eig:root-alg}
Let $A$ and $B$ be Azumaya algebras with center $C$. Let $R$ be a $C$-algebra and $S$ a $\C$-algebra.
\begin{enumerate}
\item There are natural isomorphisms $\sqrt[A]{\sqrt[B]{R}} \simeq \sqrt[A \otimes_C B]{R} \simeq \sqrt[B]{\sqrt[A]{R}}$ of $C$-algebras.
\item For any morphism of commutative algebras $C \to D$, we get natural isomorphisms $\sqrt[A \otimes_C D]{R \otimes_C D} \simeq \sqrt[A]{R} \otimes_C D$.
\item Suppose that $A$ is of constant degree $n$. Then $\sqrt[A]{S \otimes C}$ is, \'etale locally on $C$, isomorphic to $\sqrt[n]{S} \otimes C$. \label{enum:etale-locally}
\item A $C$-linear morphism $A \to B$ induces a $C$-linear morphism $\sqrt[B]{R} \to \sqrt[A]{R}$, functorial in $R$.
\end{enumerate}
\end{proposition}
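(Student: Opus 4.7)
The whole proposition can be proved uniformly from the adjunction $\sqrt[A]{-} \dashv -\otimes_C A$ of Proposition \ref{prop:adjoint-root-tensor}; the plan is to reduce each assertion to uniqueness of adjoints or a Yoneda identity, avoiding any direct manipulation of the invariants $(R*_C A)^A$.

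For \textbf{part (1)} I would first note that $A\otimes_C B$ is again Azumaya over $C$ (the symmetric monoidal structure on $\wis{Azu}_C$ under $\otimes_C$), so $\sqrt[A\otimes_C B]{-}$ is defined and is the left adjoint to $-\otimes_C(A\otimes_C B)$. Composing adjunctions,
\[
\sqrt[A]{-} \circ \sqrt[B]{-} \;\dashv\; (-\otimes_C B)\circ(-\otimes_C A) \;=\; -\otimes_C(A\otimes_C B),
\]
so uniqueness of left adjoints gives the natural isomorphism $\sqrt[A]{\sqrt[B]{R}}\simeq\sqrt[A\otimes_C B]{R}$. The symmetric argument swapping $A$ and $B$ yields the other isomorphism. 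For \textbf{part (2)}, I would verify that both sides corepresent the same functor on $\wis{Alg}_D$. For $T\in\wis{Alg}_D$, using Proposition \ref{prop:adjoint-root-tensor} on both $C$ and $D$ and the base-change adjunction $-\otimes_C D\dashv\text{forget}$,
\[
\Hom_D\bigl(\sqrt[A\otimes_C D]{R\otimes_C D},T\bigr) \simeq \Hom_D\bigl(R\otimes_C D,\,T\otimes_D(A\otimes_C D)\bigr) \simeq \Hom_C(R,T\otimes_C A)
\]
\[
\simeq \Hom_C\bigl(\sqrt[A]{R},T\bigr) \simeq \Hom_D\bigl(\sqrt[A]{R}\otimes_C D,T\bigr),
\]
naturally in $T$, so Yoneda gives the claim.

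For \textbf{part (3)} I would choose, by Azumaya-ness of constant degree $n$, an étale cover $C\to C'$ trivialising $A$, i.e.\ $A\otimes_C C'\cong\M_n(C')=\M_n(\C)\otimes_\C C'$. Two applications of part (2) yield
\[
\sqrt[A]{S\otimes C}\otimes_C C' \;\simeq\; \sqrt[\M_n(C')]{S\otimes C'} \;\simeq\; \sqrt[\M_n(\C)]{S}\otimes_\C C',
\]
and the remaining task is the identification $\sqrt[\M_n(\C)]{S}\cong\sqrt[n]{S}$, which is a direct comparison with the classical Bergman/Procesi root algebra defined as $(S*\M_n(\C))^{\M_n(\C)}$. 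Finally \textbf{part (4)} is obtained by mate calculus: a $C$-linear morphism $A\to B$ in $\wis{Azu}_C$ gives a natural transformation of right adjoints $-\otimes_C A\Rightarrow -\otimes_C B$, and its mate is the desired natural transformation $\sqrt[B]{-}\Rightarrow\sqrt[A]{-}$ of left adjoints; evaluating at $R$ gives the morphism, and naturality in $R$ is built into the construction.

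The one place where I expect genuine content rather than formal adjunction bookkeeping is the matrix-algebra identification at the end of part (3); everything else is a clean consequence of Proposition \ref{prop:adjoint-root-tensor}. I would also take care that the isomorphisms in (1) and (2) are asserted as naturality data coming from the canonical adjunction units/counits (rather than merely abstract existence), so that they can be used compatibly when stacking parts (2) and (3) together as above.
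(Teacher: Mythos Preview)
Your proposal is correct and follows essentially the same approach as the paper: the paper's proof also reduces everything to the adjunction of Proposition~\ref{prop:adjoint-root-tensor} and the Yoneda lemma, working out part~(1) explicitly via the chain $\wis{Alg}_C(\sqrt[A]{\sqrt[B]{R}},S)\simeq\wis{Alg}_C(R,B\otimes_C A\otimes_C S)\simeq\wis{Alg}_C(\sqrt[A\otimes_C B]{R},S)$ and declaring the rest analogous. Your treatment is in fact more thorough than the paper's, spelling out parts~(2)--(4) and flagging the one non-formal step (the identification $\sqrt[\M_n(\C)]{S}\cong\sqrt[n]{S}$ in part~(3)).
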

\begin{proof}
All statements follow by invoking the Yoneda Lemma and some computations. We prove (1) as an example. For any $C$-algebra $S$, we have
\begin{align*}
\wis{Alg}_C(\sqrt[A]{\sqrt[B]{R}}, S) &\simeq \wis{Alg}_C(\sqrt[B]{R}, A \otimes_C S) \\
                                                  &\simeq \wis{Alg}_C(R, B \otimes_C A \otimes_C S) \\
                                                  &\simeq \wis{Alg}_C(\sqrt[A \otimes_C B]{R}, S),
\end{align*}
so by the Yoneda Lemma we have $\sqrt[A]{\sqrt[B]{R}} \simeq \sqrt[A \otimes_C B]{R}$. Similarly for $\sqrt[B]{\sqrt[A]{R}}$.
\end{proof}

\begin{theorem} \label{main2} If $A$ is a degree $n$ Azumaya algebra with center $C$, then for every algebra $R$ there is an affine $C$-scheme $\wis{rep}_A(R)$, which we call the {\em Azumaya representation scheme} of $R$ with respect to $A$, representing the functor
\[
\wis{Rep}_A(R)~:~\wis{affine}_C \rTo \wis{sets} \qquad Z \mapsto \wis{Alg}_{\C}(R,A \otimes_C \Oscr(Z)) \]
\end{theorem}

\begin{proof}
Define the coordinate ring of the Azumaya representation scheme as
\begin{equation*}
\Oscr(\wis{rep}_A(R)) = (\!\sqrt[A]{R \otimes C})_\mathrm{ab}.
\end{equation*}
To check that this represents the given functor, use Proposition \ref{prop:adjoint-root-tensor} and the fact that $- \otimes C$ and the Abelianization functor $\mathrm{ab}$ are both adjoint to the appropriate forgetful functors.
\end{proof}

 \end{document}